\newenvironment{theorem}[2][Theorem]{\begin{trivlist}
\item[\hskip \labelsep {\bfseries #1}\hskip \labelsep {\bfseries #2}]}{\end{trivlist}}
\newenvironment{proposition}[2][Proposition]{\begin{trivlist}
\item[\hskip \labelsep {\bfseries #1}\hskip \labelsep {\bfseries #2}]}{\end{trivlist}}
\newtheorem{thm}{Theorem}
\newtheorem{prop}[thm]{Proposition}
\theoremstyle{definition}
\newtheorem{defn}[thm]{Definition}
\theoremstyle{remark}
\newtheorem{rem}[thm]{Remark}
\theoremstyle{definition}
\numberwithin{equation}{section}
\begin{document}

\title[Dualizing complexes and vanishing Koszul homology]{Dualizing complexes and homomorphisms vanishing in Koszul homology}%
\author{Javier Majadas}%
\address{Departamento de \'Algebra, Facultad de Matem\'aticas, Universidad de Santiago de Compostela, E15782 Santiago de Compostela, Spain}%
\email{j.majadas@usc.es}%

%\subjclass{13H05, 13H10, 13D03, 13D05}%
\keywords{Dualizing complex, derived reflexivity, Gorenstein dimension}%
\thanks{2010 {\em Mathematics Subject Classification.} 13H10, 13D05}

%\date{}%
%\dedicatory{}%
%\commby{}%
% ----------------------------------------------------------------
\begin{abstract}
Let $C$ be a semidualizing complex over a noetherian local ring $A$. If there exists a local homomorphism with source $A$ satisfying some homological properties, then $C$ is dualizing.
\end{abstract}
\maketitle
% ----------------------------------------------------------------

\section{Introduction}

There is a number of characterizations of properties (of homological type) of noetherian local rings of positive characteristic in terms (of homological properties) of the Frobenius homomorphism. We start with \cite {Ku}:
\begin{theorem}{(Kunz)}
\textit{Let $A$ be a noetherian local ring containing a field of characteristic $p>0$, and let $\phi :A \rightarrow A, \phi(a)=a^p$ be the Frobenius homomorphism. We denote by $^{\phi}A$ the ring $A$ considered as $A$-module via $\phi$. The following conditions are equivalent:\\*
(i) $A$ is regular\\*
(ii) $^{\phi}A$ is a flat $A$-module.}
\end{theorem}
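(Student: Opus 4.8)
I would prove the two implications of Kunz's theorem separately.

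\emph{$(\mathrm{i})\Rightarrow(\mathrm{ii})$.} Even though ${}^{\phi}A$ need not be finitely generated as an $A$-module through $\phi$, it is free of rank one over the \emph{target} copy of $A$, hence finitely generated and $\mathfrak m$-adically separated there; since $\phi$ is a local homomorphism, the local criterion for flatness reduces the assertion to $\operatorname{Tor}_1^A(k,{}^{\phi}A)=0$, where $k=A/\mathfrak m$. Fixing a regular system of parameters $x_1,\dots,x_d$ of the regular ring $A$, the field $k$ is resolved by the Koszul complex $K_\bullet(x_1,\dots,x_d;A)$, so $\operatorname{Tor}_i^A(k,{}^{\phi}A)=H_i\bigl(K_\bullet(x_1,\dots,x_d;A)\otimes_A{}^{\phi}A\bigr)$. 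As each $x_j$ acts on ${}^{\phi}A$ through $\phi$, i.e.\ as multiplication by $x_j^{\,p}$, this is the homology of $K_\bullet(x_1^{\,p},\dots,x_d^{\,p};A)$. But $x_1^{\,p},\dots,x_d^{\,p}$ is a system of parameters of the Cohen--Macaulay ring $A$, hence a regular sequence, so its Koszul complex is acyclic in positive degrees; in particular $\operatorname{Tor}_1^A(k,{}^{\phi}A)=0$ and ${}^{\phi}A$ is flat.

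\emph{$(\mathrm{ii})\Rightarrow(\mathrm{i})$, setup.} A flat local homomorphism is faithfully flat, and a composite of flat ring homomorphisms is flat, so every iterate $\phi^{e}$ is flat; equivalently, the Peskine--Szpiro Frobenius functor $F^{e}=-\otimes_A{}^{\phi^{e}}\!A$ --- which is right exact, preserves free modules, and replaces a presentation matrix $(a_{ij})$ by $(a_{ij}^{\,p^{e}})$ --- is exact. Applying $F^{e}$ to a minimal free resolution $P_\bullet$ of $k$, exactness makes $F^{e}(P_\bullet)$ a free resolution of $F^{e}(k)=A/\mathfrak m^{[p^{e}]}$ (with $\mathfrak m^{[p^{e}]}$ generated by the $p^{e}$-th powers of a minimal generating set of $\mathfrak m$), and it is again minimal since the new entries lie in $\mathfrak m^{[p^{e}]}\subseteq\mathfrak m$. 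Hence $\operatorname{pd}_A\!\bigl(A/\mathfrak m^{[p^{e}]}\bigr)=\operatorname{pd}_A k$, with identical Betti numbers, for every $e$. By Serre's criterion it is then enough to show $\operatorname{pd}_A k<\infty$, and I would argue by induction on $d=\dim A$.

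\emph{$(\mathrm{ii})\Rightarrow(\mathrm{i})$, induction.} If $d=0$ then $\mathfrak m$ is nilpotent, so $A/\mathfrak m^{[p^{e}]}=A$ for $e\gg0$, and comparing Betti numbers forces $\operatorname{pd}_A k=\operatorname{pd}_A A=0$, i.e.\ $A=k$. If $d\ge1$, one first checks $\operatorname{depth}A\ge1$: were it $0$, there would be $0\ne y$ with $\mathfrak m y=0$, so $yA\cong k$; feeding $0\to k\xrightarrow{\,1\mapsto y\,}A$ to the exact functor $-\otimes_A{}^{\phi}A$ and unravelling the identifications shows that multiplication by $y^{\,p}$ is injective on $A/\mathfrak m^{[p]}$, which forces $\mathfrak m=\mathfrak m^{[p]}\subseteq\mathfrak m^{2}$, hence $\mathfrak m=0$ by Nakayama --- impossible for $d\ge1$. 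So by prime avoidance there is a non-zerodivisor $x\in\mathfrak m\setminus\mathfrak m^{2}$; set $\bar A=A/xA$, which is local of dimension $d-1$ and is regular if and only if $A$ is (because $x\notin\mathfrak m^{2}$).

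\emph{The main obstacle.} It remains to descend Frobenius flatness, i.e.\ to show that ${}^{\bar\phi}\bar A$ is flat over $\bar A$; then the induction hypothesis makes $\bar A$, hence $A$, regular. I expect this change-of-rings step to be the one real difficulty. The idea is to exploit that $x$ is a non-zerodivisor which is at the same time a minimal generator of $\mathfrak m$, so that a minimal $A$-free resolution of $k$ is built from a minimal $\bar A$-free resolution of $k$ together with a Koszul factor on $x$; since $F_A$ is exact and carries the Koszul complex on $x$ to the still-exact Koszul complex on $x^{p}$, comparing the two resolutions forces $F_{\bar A}$ to be exact on the minimal $\bar A$-free resolution of $k$, whence $\operatorname{Tor}_1^{\bar A}(k,{}^{\bar\phi}\bar A)=0$ and ${}^{\bar\phi}\bar A$ is flat by the local criterion used as in the first paragraph. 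Everything outside this descent step is formal bookkeeping with the Frobenius functor and the local criterion for flatness.
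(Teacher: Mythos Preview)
The paper does not prove Kunz's theorem: it is quoted in the introduction purely as motivation and attributed to \cite{Ku}, with no argument supplied. There is therefore no ``paper's own proof'' against which to compare your attempt.

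On the substance of your sketch: the implication (i)$\Rightarrow$(ii) is correct and standard. For (ii)$\Rightarrow$(i), the setup via the Peskine--Szpiro functor, the equality of Betti numbers, the base case $d=0$, and the depth argument are all fine (the map you obtain is $A/\mathfrak m^{[p]}\to A$, $\bar b\mapsto y^{p}b$; since $\mathfrak m\subseteq\operatorname{Ann}(y^{p})$, injectivity forces $\mathfrak m\subseteq\mathfrak m^{[p]}\subseteq\mathfrak m^{2}$, giving the contradiction you state). The genuine gap is exactly where you flag it: the descent of Frobenius-flatness from $A$ to $\bar A=A/xA$. Your proposed mechanism---assembling the minimal $A$-resolution of $k$ from a minimal $\bar A$-resolution plus a Koszul factor on $x$, applying $F_A$, and then ``comparing'' to force $F_{\bar A}$ exact---is not a proof as written; making it rigorous requires a careful change-of-rings comparison (mapping cone or spectral sequence) that does nontrivial work, and exactness of $F_A$ on one complex does not formally imply exactness of $F_{\bar A}$ on another. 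Routes that sidestep this descent include Kunz's original length/multiplicity argument, or---closer to the spirit of this paper---the strengthening of Rodicio \cite{Ro}, based on Avramov's injectivity result \cite{Av1}, that already $\operatorname{fd}_A({}^{\phi}\!A)<\infty$ forces $A$ to be regular.
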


Some years later, Kunz theorem was improved by Rodicio \cite{Ro} as follows: if the flat dimension $fd_A(^{\phi}A)<\infty$, then $A$ is regular.

So we can think if similar characterizations for the properties complete intersection, Gorenstein and Cohen-Macaulay exist. For complete intersections the result was obtained in \cite {BM}, characterizing complete intersections rings by the finiteness of the complete intersection dimension \cite{AGP} of its Frobenius homomorphism, and a similar characterization was also found for the Cohen-Macaulay property in \cite {TY}. \\

We will examine now in more detail the case of the Gorenstein property. A first result was obtained by Herzog \cite {He} (see also \cite [Theorem 1.1]{Go} and \cite [Proposition 6.1]{TY}):

\begin{theorem}{(Herzog)}
\textit{Let $A$ be a noetherian local ring containing a field of characteristic $p>0$, and let $\phi$ be its Frobenius homomorphism. Assume that $\phi$ is finite. The following conditions are equivalent:\\*
(i) $A$ is Gorenstein\\*
(ii) $Ext_A^i(^{\phi^r}\!A,A)=0$ for all $i>0$ and infinitely many $r>0$.}
\end{theorem}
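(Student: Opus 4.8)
The plan is to read off everything from the dualizing complex of $A$, which is available here precisely because $\phi$ is finite: an $F$-finite noetherian local ring is excellent, and in fact a homomorphic image of a regular ring, hence it carries a normalized dualizing complex $D$, characterised by $\mathbf R\mathrm{Hom}_A(k,D)\simeq k$ for $k=A/\mathfrak m$. The one structural input I would isolate is the identity
\begin{equation*}
\mathbf R\mathrm{Hom}_A\bigl({}^{\phi^{r}}\!A,\ D\bigr)\ \simeq\ {}^{\phi^{r}}\!D
\qquad\text{in }\mathcal D(A),\ \ r>0 .\tag{$\star$}
\end{equation*}
Its proof: $\phi^{r}\colon A\to A$ is a finite local homomorphism, so $\mathbf R\mathrm{Hom}_A({}^{\phi^{r}}\!A,D)$, equipped with the module structure coming from the target of $\phi^{r}$, is again a dualizing complex for $A$; it is normalized because ${}^{\phi^{r}}\!k$ has length $1$ over $A$ (Frobenius is additive in characteristic $p$), so $\mathbf R\mathrm{Hom}_A(k,\mathbf R\mathrm{Hom}_A({}^{\phi^{r}}\!A,D))\simeq\mathbf R\mathrm{Hom}_A({}^{\phi^{r}}\!k,D)\simeq k$; by uniqueness of the normalized dualizing complex of a local ring it coincides with $D$, and restricting scalars along $\phi^{r}$ gives $(\star)$. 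In a phrase: Grothendieck duality is compatible with restriction along Frobenius.

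Granting $(\star)$, the implication $(\mathrm i)\Rightarrow(\mathrm{ii})$ is immediate, and in fact holds for every $r>0$: if $A$ is Gorenstein then $D\simeq\Sigma^{d}A$ with $d=\dim A$, so $(\star)$ reads $\mathbf R\mathrm{Hom}_A({}^{\phi^{r}}\!A,A)\simeq{}^{\phi^{r}}\!A$, a complex concentrated in degree $0$; hence $\mathrm{Ext}^{i}_A({}^{\phi^{r}}\!A,A)=0$ for all $i>0$.

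For $(\mathrm{ii})\Rightarrow(\mathrm i)$ the plan is to show that the hypothesis forces ${}^{\phi^{r}}\!A$ to have finite Gorenstein dimension over $A$ for some $r$, and then to conclude that $A$ is Gorenstein. Fix $r$ with $\mathrm{Ext}^{>0}_A({}^{\phi^{r}}\!A,A)=0$, so $\mathbf R\mathrm{Hom}_A({}^{\phi^{r}}\!A,A)\simeq N_{r}$ is a module. Using $A\simeq\mathbf R\mathrm{Hom}_A(D,D)$ together with adjunction and $(\star)$, one rewrites $\mathbf R\mathrm{Hom}_A({}^{\phi^{r}}\!A,A)$ as $\mathbf R\mathrm{Hom}_A(\mathbf L\phi^{r*}D,D)$, where $\mathbf L\phi^{r*}D$ is the derived Frobenius pullback of $D$; Grothendieck duality then gives $\mathbf L\phi^{r*}D\simeq\mathbf R\mathrm{Hom}_A(N_{r},D)$, so this pullback is bounded and is the dual of a module, and comparing the top cohomological degrees of its two descriptions yields $\operatorname{depth}N_{r}=\operatorname{depth}A$. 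Now the assumption that (ii) holds for infinitely many $r$ enters: writing $\phi^{r'}=\phi^{\,r'-r}\circ\phi^{r}$ and applying the coextension adjunction in the form $\mathbf R\mathrm{Hom}_A(\operatorname{res}_{\phi^{r}}V,A)\simeq\mathbf R\mathrm{Hom}_A(V,N_{r})$, one gets $\mathrm{Ext}^{>0}_A({}^{\phi^{t}}\!A,N_{r})=0$ for infinitely many $t$; feeding this back, and using $\operatorname{depth}N_{r}=\operatorname{depth}A$ together with the Auslander--Bridger equality, upgrades the one-sided vanishing to $\mathrm{Ext}^{>0}_A(N_{r},A)=0$ with ${}^{\phi^{r}}\!A\xrightarrow{\ \sim\ }({}^{\phi^{r}}\!A)^{\ast\ast}$, i.e. to ${}^{\phi^{r}}\!A$ being totally reflexive. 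Finally, ${}^{\phi^{r}}\!A$ totally reflexive forces $A$ Gorenstein; this is the ``Gorenstein'' counterpart of Kunz's theorem (regular $\Leftrightarrow$ flat Frobenius) and of Rodicio's refinement, established by localising, inducting on $\dim A$, and exploiting that Frobenius amplifies homological invariants.

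The step I expect to be the genuine obstacle is precisely this last bootstrap: the hypothesis in (ii) is only the \emph{one-sided} vanishing $\mathrm{Ext}^{>0}_A({}^{\phi^{r}}\!A,A)=0$, which is strictly weaker than finiteness of the Gorenstein dimension, and closing that gap is exactly what needs the full strength of (ii) for infinitely many powers of Frobenius---mirroring the fact that Rodicio's sharpening of Kunz's theorem, and Herzog's original argument in \cite{He}, are subtler than Kunz's statement itself.
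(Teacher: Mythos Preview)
First, a point of orientation: the paper does not give its own proof of Herzog's theorem---it is quoted in the introduction as background, with references to \cite{He}, \cite{Go}, and \cite{TY}---so there is no in-paper argument to compare your proposal against. That said, your $(\mathrm i)\Rightarrow(\mathrm{ii})$ via $(\star)$ is clean and correct: finiteness of $\phi$ supplies a dualizing complex, Grothendieck duality for the finite map $\phi^{r}$ yields $(\star)$, and the Gorenstein hypothesis collapses $D$ to a shift of $A$.

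The $(\mathrm{ii})\Rightarrow(\mathrm i)$ direction, however, is not a proof as written, and you yourself flag the obstacle without resolving it. There are two concrete gaps. First, the sentence ``feeding this back \dots\ upgrades the one-sided vanishing to $\mathrm{Ext}^{>0}_A(N_{r},A)=0$ with ${}^{\phi^{r}}\!A\xrightarrow{\ \sim\ }({}^{\phi^{r}}\!A)^{\ast\ast}$'' carries the entire weight of the implication and is left unargued: the coinduction adjunction does give $\mathrm{Ext}^{>0}_A({}^{\phi^{t}}\!A,N_{r})=0$ for infinitely many $t$, but passing from this to $\mathrm{Ext}^{>0}_A(N_{r},A)=0$ and to biduality requires a genuine further idea, and invoking the Auslander--Bridger formula at this point is circular, since that formula presupposes finite G-dimension, which is precisely what you are trying to establish. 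Second, even granting total reflexivity of ${}^{\phi^{r}}\!A$, the step ``${}^{\phi^{r}}\!A$ totally reflexive $\Rightarrow$ $A$ Gorenstein'' is exactly the theorem of Iyengar and Sather-Wagstaff that the paper records \emph{after} Herzog's result as a later and substantially harder strengthening; citing it here is logically admissible but leaves the argument neither self-contained nor close to Herzog's own route, which works directly with Bass numbers and injective dimension and never passes through G-dimension.
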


This result was improved in \cite {IS}, removing in particular the annoying finiteness hypothesis on $\phi$:

\begin{theorem}{(Iyengar, Sather-Wagstaff)}
\textit{Let $A$ be a noetherian local ring containing a field of characteristic $p>0$, and let $\phi$ be its Frobenius homomorphism. The following conditions are equivalent:\\*
(i) $A$ is Gorenstein\\*
(ii) G-dim$_A(^{\phi^r}\!A)<\infty$ for some integer $r>0$.}
\end{theorem}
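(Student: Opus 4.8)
I would prove the implication (ii)~$\Rightarrow$~(i); the reverse implication already holds with $r=1$, since over a Gorenstein local ring every finitely generated module has finite Gorenstein dimension (Auslander--Bridger), and when $^{\phi}\!A$ is not finitely generated one reaches the same conclusion by inspecting a Cohen factorization of $\phi$ (the closed fibre of $\phi$ being regular, the intermediate ring is Gorenstein). The main implication I would organize in three stages.

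\emph{Stage 1: reduction to the complete $F$-finite case.} Completion changes nothing: $\widehat A$ is local of characteristic $p$, its Frobenius is the completion of that of $A$, $A$ is Gorenstein iff $\widehat A$ is, and Gorenstein dimension ascends to and descends from the faithfully flat map $A\to\widehat A$; also $^{\phi^{r}}\!\widehat A\cong{}^{\phi^{r}}\!A\otimes_{A}\widehat A$. So we may assume $A$ complete. Next, base change along the perfect closure $k\hookrightarrow k^{1/p^{\infty}}$ of the residue field: the completed tensor product $A'=A\,\widehat\otimes_{k}\,k^{1/p^{\infty}}$ is faithfully flat over $A$ with residue field $k^{1/p^{\infty}}$, i.e.\ with a field as closed fibre, so $A'$ is Gorenstein iff $A$ is; writing $A=k[[x_{1},\dots,x_{n}]]/I$ one sees that $A'=k^{1/p^{\infty}}[[x_{1},\dots,x_{n}]]/IA'$ is module-finite over its subring of $p$-th powers, hence $F$-finite. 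The Frobenius of $A'$ restricts to that of $A$, and, the residue-field extension being perfect, the Frobenius push-forward base-changes correctly, so $\mathrm{G\text{-}dim}_{A}({}^{\phi^{r}}\!A)<\infty$ forces $\mathrm{G\text{-}dim}_{A'}({}^{\phi^{r}}\!A')<\infty$. Thus we may assume $A$ complete and $F$-finite; in particular $^{\phi^{r}}\!A$ is a finitely generated $A$-module and the endomorphism $\phi$ is finite.

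\emph{Stage 2: the hypothesis forces $^{\phi^{r}}\!A$ to be totally reflexive.} The module $^{\phi^{r}}\!A$ is $A$ with scalars acting through $a\mapsto a^{p^{r}}$, so if $y_{1},\dots,y_{d}$ is a maximal $A$-regular sequence then $y_{1}^{p^{r}},\dots,y_{d}^{p^{r}}$ is again $A$-regular (powers of a regular sequence form a regular sequence), whence $\mathrm{depth}_{A}({}^{\phi^{r}}\!A)\ge\mathrm{depth}\,A$. By the Auslander--Bridger formula, finiteness of $\mathrm{G\text{-}dim}_{A}({}^{\phi^{r}}\!A)$ then gives $\mathrm{G\text{-}dim}_{A}({}^{\phi^{r}}\!A)=\mathrm{depth}\,A-\mathrm{depth}_{A}({}^{\phi^{r}}\!A)\le 0$, so $\mathrm{G\text{-}dim}_{A}({}^{\phi^{r}}\!A)=0$. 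In particular $\mathrm{Ext}^{i}_{A}({}^{\phi^{r}}\!A,A)=0$ for all $i>0$.

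\emph{Stage 3: propagation and conclusion.} This is where I expect the real difficulty to lie. Stage~2 gives the vanishing for the given $r$, but to invoke Herzog's theorem above one needs $\mathrm{Ext}^{i}_{A}({}^{\phi^{nr}}\!A,A)=0$ for all $i>0$ and infinitely many $n$, i.e.\ that $^{\phi^{nr}}\!A$ remains totally reflexive under iteration of the Frobenius. Here Gorenstein dimension is more recalcitrant than flat or complete intersection dimension, for which the analogous propagation is forced by a numerical inequality; instead one must work in the derived category, exploiting the good formal properties of the (derived) Frobenius functor of Peskine--Szpiro --- its compatibility with $\mathrm{RHom}(-,A)$ and with the biduality morphism --- to transport total reflexivity of $^{\phi^{r}}\!A$ to $^{\phi^{nr}}\!A$ for every $n\ge 1$. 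Granting this, condition~(ii) of Herzog's theorem holds for the finite endomorphism $\phi$, so $A$ is Gorenstein, and running the reductions of Stage~1 in reverse (faithfully flat descent of the Gorenstein property) shows the original ring is Gorenstein as well.
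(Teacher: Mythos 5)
The crux of your argument, Stage 3, is a genuine gap, and it is exactly where the real content of the theorem lies. Knowing $\mathrm{G\text{-}dim}_A({}^{\phi^{r}}\!A)=0$ for a \emph{single} $r$ does not by any formal property of the Peskine--Szpiro functor give $\mathrm{Ext}^i_A({}^{\phi^{nr}}\!A,A)=0$ for all $n$: the module ${}^{\phi^{nr}}\!A$ is the restriction of scalars of ${}^{\phi^{r}}\!A$ along $\phi^{(n-1)r}$, and restriction of scalars along a map of finite (even zero) G-dimension is not known to preserve total reflexivity --- this is an instance of the open composition problem for G-dimension. The derived Frobenius base change $-\otimes^{\textbf{L}}_A{}^{\phi}\!A$ does interact well with finite projective or flat dimension (which is why the regularity and CI analogues propagate numerically), but its compatibility with $\textbf{R}{\rm Hom}_A(-,A)$ and the biduality map, which you invoke to ``transport'' total reflexivity, is precisely what fails to be formal; if it were, the Iyengar--Sather-Wagstaff theorem would be an immediate corollary of Herzog's, and that is not how it is proved. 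Stage 1 is also glibber than it looks: for non-$F$-finite $A$ the G-dimension of $\phi^{r}$ is defined through Cohen factorizations, and the identification ${}^{\phi^{r}}\!A'\cong{}^{\phi^{r}}\!A\otimes_A A'$ for $A'=A\,\widehat\otimes_k k^{1/p^{\infty}}$ involves a completed tensor product and is the nontrivial point of the $\Gamma$-construction, not an automatic base change.

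For comparison: the paper does not prove this statement at all; it quotes it from [IS], where it is obtained as a corollary of the more general theorem on contracting endomorphisms (the Frobenius being contracting), and that theorem is proved by a quite different route --- Cohen factorizations and the Avramov--Foxby theory of homomorphisms of finite G-dimension (comparison of Bass series/numbers under iteration of a contracting map) --- which avoids both the reduction to the $F$-finite case and Herzog's theorem. So your plan is a genuinely different strategy, but as written it assumes the propagation step rather than proving it, and that step is the heart of the matter.
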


Here G-dim denotes the Gorenstein dimension introduced by Auslander and Bridger in \cite{AB} (properly speaking, a modification of the original definition using Cohen factorizations \cite [p.254]{AF2}, \cite [Definition 3.3]{IS}).\\

Over the last years, some research was conducted in order to extend these results from the particular case of the Frobenius homomorphism to larger classes of homomorphisms. A first step was to consider contracting endomorphisms. An endomorphism $f$ of a noetherian local ring $(A,\mathfrak{m},k)$ is contracting if for any integer $s>0$ there exist an integer $r>0$ such that $f^r(\mathfrak{m})\subset \mathfrak{m}^s$. The Frobenius homomorphism is an example of contracting endomorphism. If $f$ is a contracting endomorphism on a noetherian local ring $A$, then $A$ must contain a field (of fixed elements), but unlike the case of the Frobenius homomorphism, it can be of characteristic zero. The above results for regularity were extended to contracting endomorphisms in \cite [Proposition 2.6]{KL}. For the complete intersection property they were extended (even in an improved form) in \cite{AIM}, \cite{AHIY}.

The Gorenstein case was studied first in \cite{IS}. In fact, they obtain the theorem stated above as a consequence of the more general:

\begin{theorem}{(Iyengar, Sather-Wagstaff)}
\textit{Let $A$ be a noetherian local ring and $\phi:A \rightarrow A$ a contracting endomorphism. Then the following conditions are equivalent:\\*
(i) $A$ is Gorenstein\\*
(ii) G-dim$_A(^{\phi^r}\!A)<\infty$ for some integer $r>0$.}
\end{theorem}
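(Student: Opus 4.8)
The plan is to establish the two implications separately, in each case first reducing to the situation where $A$ is complete. This reduction is harmless: $A$ is Gorenstein if and only if $\widehat{A}$ is; the extension $\widehat{\phi^{r}}\colon\widehat{A}\to\widehat{A}$ is again contracting; and the Gorenstein dimension of a local homomorphism is by definition read off from its semicompletion, so that $\mathrm{G\text{-}dim}_{A}({}^{\phi^{r}}\!A)<\infty$ if and only if $\mathrm{G\text{-}dim}_{\widehat{A}}({}^{\widehat{\phi^{r}}}\widehat{A})<\infty$. Assume henceforth $A=\widehat{A}$.

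For $(i)\Rightarrow(ii)$ the contracting hypothesis is not needed and one may take $r=1$. Let $\psi\colon A\to A$ be any local homomorphism and choose a Cohen factorization $A\xrightarrow{\dot{\psi}}R\xrightarrow{\psi'}A$, so that $\dot{\psi}$ is flat local with regular closed fibre and $\psi'$ is surjective. Because $A$ is Gorenstein and the closed fibre $R/\mathfrak{m}R$ of $\dot{\psi}$ is regular, $R$ is Gorenstein; because $\psi'$ is surjective, $A$ is a finitely generated $R$-module; and over a Gorenstein local ring every finitely generated module has finite Auslander--Bridger Gorenstein dimension. Hence $\mathrm{G\text{-}dim}_{R}(A)<\infty$, which is precisely the assertion $\mathrm{G\text{-}dim}(\psi)<\infty$, i.e.\ $\mathrm{G\text{-}dim}_{A}({}^{\psi}A)<\infty$; applying this with $\psi=\phi$ yields $(ii)$.

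For $(ii)\Rightarrow(i)$, put $\psi:=\phi^{r}$, a contracting endomorphism with $\mathrm{G\text{-}dim}_{A}({}^{\psi}A)<\infty$, and fix a Cohen factorization $A\xrightarrow{\dot{\psi}}R\xrightarrow{\psi'}A$ as above; then $A=R/\ker\psi'$ is a finitely generated $R$-module of finite Gorenstein dimension, i.e.\ $A$ is derived $R$-reflexive. Since $R$ is complete it has a dualizing complex $D^{R}$, so that $\mathrm{RHom}_{R}(A,D^{R})$ is, up to shift, a dualizing complex $D^{A}$ for $A$; and since $\dot{\psi}$ is flat with regular closed fibre of dimension $c$, one has $D^{R}\simeq\Sigma^{c}\bigl(D^{A}\otimes_{A}R\bigr)$ up to shift. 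As $\psi$ is an \emph{endomorphism}, these two descriptions involve the same ring $A$ at both ends and combine into a self-referential constraint on $D^{A}$; derived $R$-reflexivity of $A$ is what will make this constraint exploitable.

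The remaining, and principal, step is to deduce from this constraint — now genuinely using that $\psi$ is contracting — that $D^{A}$ is invertible in the derived category, equivalently that $\mathrm{id}_{A}A<\infty$, equivalently that $A$ is Gorenstein. The route I would take is asymptotic. All powers $\psi^{n}$ are contracting, with $\psi^{n}(\mathfrak{m})\subseteq\mathfrak{m}^{s(n)}$ and $s(n)\to\infty$, and the theory of contracting endomorphisms of Avramov--Iyengar--Miller \cite{AIM} should yield that $\mathrm{G\text{-}dim}(\psi^{n})$ remains uniformly bounded while the modules ${}^{\psi^{n}}\!A$, and the duality data they carry, converge in a suitable sense to the residue field $k$. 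Tracking the associated numerical invariants (Betti and Bass series) of the dualizing-complex data supplied by Cohen factorizations of the $\psi^{n}$, and showing that these specialize in the limit, up to shift, to those of $k$, would give $\mathrm{G\text{-}dim}_{A}(k)<\infty$, whence $A$ is Gorenstein — a local ring whose residue field has finite Gorenstein dimension being Gorenstein. I expect essentially all the difficulty to concentrate here: proving the uniform boundedness of $\mathrm{G\text{-}dim}(\psi^{n})$ and the precise convergence statement, that is, controlling how a dualizing complex behaves under iterated restriction of scalars along a contracting endomorphism; everything preceding this step is formal manipulation with Cohen factorizations and derived reflexivity. (An organizing alternative, apparently the strategy pursued here, is to prove the more general statement that a semidualizing complex $C$ over $A$ is dualizing as soon as there exists a local homomorphism with source $A$ satisfying an appropriate homological finiteness, and then to specialize to $C=A$.)
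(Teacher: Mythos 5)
Your reduction to the complete case and your proof of (i) $\Rightarrow$ (ii) are fine (and indeed that direction needs neither the contracting hypothesis nor $r>1$). But the implication carrying all the content of the theorem, (ii) $\Rightarrow$ (i), is not actually proved: what you offer is a programme phrased in the conditional (``should yield'', ``would give'', ``I expect''). The two assertions on which that programme rests --- uniform boundedness of $\mathrm{G\text{-}dim}(\psi^{n})$ over all iterates, and a ``convergence'' of the modules ${}^{\psi^{n}}\!A$ together with their duality data to the residue field $k$ strong enough to transfer finiteness of G-dimension to $k$ --- are precisely the hard part, and neither is supplied; nor is it clear they hold in the form you need, since finiteness of G-dimension is not preserved by such limiting processes in any obvious sense, and the Betti/Bass data of ${}^{\psi^{n}}\!A$ do not visibly specialize to those of $k$. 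So there is a genuine gap at the core of the argument: the self-referential constraint on $D^{A}$ you set up is never exploited by an actual proof.

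For comparison, note that the paper does not prove this quoted theorem (it cites \cite{IS}); what it proves is a cognate generalization, Theorem \ref{th}, by a mechanism quite different from your asymptotic one, and with no limit over iterates. The contracting hypothesis enters only through the single finite statement that some power $\phi^{r}$ has the $h_2$-vanishing property, i.e.\ kills the map $H_1(\mathfrak{m})\otimes_k l \rightarrow H_1(\mathfrak{n})$ on Koszul homology, equivalently the map $H_2(A,k,l)\rightarrow H_2(B,l,l)$ on Andr\'e--Quillen homology. One then chases this vanishing through a regular factorization and a deformation $(Q,E)$ with $\mathrm{pd}_Q$ finite, and invokes Avramov's theorem \cite{Av1} that finite projective dimension forces injectivity of $H_2(Q,l,l)\rightarrow H_2(\hat{B}\otimes_RR',l,l)$; the upshot is $H_2(Q,l,l)=0$, so $Q$ is regular, hence $E$ has finite injective dimension and is dualizing, and dualizing descends along the flat map back to $A$. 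Be aware also that the paper's hypothesis is derived reflexivity* (finite projective dimension over $Q$ in some deformation), whose equivalence with finite $\mathrm{G}_C$-dimension is explicitly left open, so Theorem \ref{th} does not literally specialize to the statement you are proving; the actual proof in \cite{IS} uses yet other tools (Cohen factorizations together with Bass series of local homomorphisms). As it stands, your proposal establishes only the easy direction.
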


Subsequently, in \cite{NS} this result was extended to the more general context of G-dimension over a semidualizing complex $C$ as defined in \cite{Ch}. It is obtained in particular:

\begin{theorem}{(Nasseh, Sather-Wagstaff)}
\textit{Let $A$ be a noetherian local ring, $C$ a semidualizing complex over $A$ and $\phi:A \rightarrow A$ a contracting endomorphism. The following conditions are equivalent:\\*
(i) $C$ is a dualizing complex.\\*
(ii) G$_C$-dim$(^{\phi^r}\!A)<\infty$ for infinitely many $r>0$.}
\end{theorem}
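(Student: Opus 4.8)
The plan is as follows. The implication (i) $\Rightarrow$ (ii) is the formal one. If $C$ is dualizing, then for each $r$ pick a Cohen factorization $\widehat A\to A_r\xrightarrow{\rho}\widehat A$ of the completion of $\phi^r$ (so $\widehat A\to A_r$ is flat local, $A_r/\mathfrak m A_r$ is regular, and $\rho$ is surjective) and set $\widehat C:=\widehat A\otimes_A C$, $C_r:=A_r\otimes_{\widehat A}\widehat C$; then $C_r$ is semidualizing over $A_r$, and dualizing because injective dimension is additive along the flat map with regular closed fibre. Hence the cyclic $A_r$-module $B:=A_r/\ker\rho\cong\widehat A$ is $C_r$-reflexive, which is exactly what $\mathrm{G}_C\text{-}\dim({}^{\phi^r}\!A)<\infty$ means; so (ii) holds for every $r$. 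All the content is in the converse.

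For (ii) $\Rightarrow$ (i) I would first reduce to $A$ complete: $A\to\widehat A$ is faithfully flat, $\widehat C$ is semidualizing, $\phi$ extends to a contracting endomorphism $\widehat\phi$ with ${}^{\widehat\phi^{r}}\widehat A\cong\widehat A\otimes_A{}^{\phi^r}\!A$, the $\mathrm{G}_C$-dimension is unchanged, and $C$ is dualizing iff $\widehat C$ is. Assume then that $A$ is complete, so it has a dualizing complex $D$, and it suffices to show $C\simeq\Sigma^nD$ for some $n$. Among the infinitely many $r$ with $\mathrm{G}_C\text{-}\dim({}^{\phi^r}\!A)<\infty$, choose one that is ``deep'', say $\phi^r(\mathfrak m)\subseteq\mathfrak m^2$ (possible since $\phi$ is contracting); fix a Cohen factorization $A\xrightarrow{\dot\phi}A_r\xrightarrow{\rho}A$ of $\phi^r$ and put $B:=A_r/\ker\rho\cong A$, $C_r:=A_r\otimes_A C$, $D_r:=A_r\otimes_A D$ (dualizing over $A_r$). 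By hypothesis $B$ is $C_r$-reflexive over $A_r$, and, by the descent of finite injective dimension along the flat map $\dot\phi$ with regular closed fibre, it is enough to prove that $C_r$ is dualizing over $A_r$, i.e. $C_r\simeq\Sigma^mD_r$.

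Now put $E:=\mathbf R\mathrm{Hom}_{A_r}(C_r,D_r)$, so that $C_r=\mathbf R\mathrm{Hom}_{A_r}(E,D_r)$ because $C_r$ is $D_r$-reflexive; $E$ is again semidualizing, and $C_r\simeq\Sigma^mD_r$ is equivalent to $E\simeq\Sigma^{-m}A_r$, hence --- a perfect semidualizing complex over a local ring being a shift of the ring --- to $E$ being perfect over $A_r$. On the other hand
\[
\mathbf R\mathrm{Hom}_{A_r}(B,C_r)\;\simeq\;\mathbf R\mathrm{Hom}_{A_r}\!\bigl(B\otimes^{\mathbf L}_{A_r}E,\ D_r\bigr),
\]
and since $D_r$ is dualizing the functor $\mathbf R\mathrm{Hom}_{A_r}(-,D_r)$ detects homological boundedness, so the left-hand side is bounded precisely when $B\otimes^{\mathbf L}_{A_r}E$ is, i.e. when $\mathrm{Tor}^{A_r}_i(B,E)=0$ for $i\gg0$; this is implied by the $C_r$-reflexivity of $B$. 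Thus the whole theorem reduces to the implication
\[
\mathrm{Tor}^{A_r}_i(B,E)=0\ \text{for}\ i\gg0\ \Longrightarrow\ \mathrm{fd}_{A_r}E<\infty .
\]

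This last implication is, I expect, the main obstacle, and it is where the contracting hypothesis genuinely enters: it says that ${}^{\phi^r}\!A$, for $r$ large, is a ``test object'' rigid enough that finiteness of Tor against it forces finite flat dimension --- the same rigidity that underlies the theorems of Kunz and Rodicio for regularity, and the complete-intersection and Gorenstein-dimension analogues of Avramov, Iyengar and Miller and of Iyengar and Sather-Wagstaff. For $r\gg0$ the relation $\phi^r(\mathfrak m)\subseteq\mathfrak m^2$ makes the maximal ideal of $B$ coincide with the ideal generated by lifts of a regular system of parameters of the regular closed fibre of $\dot\phi$, so $B$ is a very particular cyclic quotient of $A_r$; I would try to exploit this by isolating the homological feature responsible for the implication --- a vanishing property in Koszul homology reflecting that $\phi^r(\mathfrak m)$ becomes highly divisible --- proving it for $\phi^r$ with $r\gg0$, and then proving the implication for every local homomorphism having that feature, all the bookkeeping being carried out inside the Auslander and Bass classes of $C_r$, $D_r$ and $E$ in the style of Christensen and Sather-Wagstaff. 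This also explains why a soft ``thick subcategory'' argument does not suffice: the thick subcategory of $\mathsf D(A_r)$ generated by ${}^{\phi^r}\!A$ consists of complexes of $B$-modules that are perfect over $B$, hence contains $k$ only when $A$ is already regular. Finally, ``infinitely many $r$'' (rather than ``some $r$'') is exactly what lets one pick a single power $\phi^r$ that is at once deep enough and of finite $\mathrm{G}_C$-dimension; and the quoted Iyengar--Sather-Wagstaff theorem (the case $C=A$) cannot simply be invoked here, since ``$C$ dualizing'' is strictly weaker than ``$A$ Gorenstein'' --- every complete local ring has a dualizing complex.
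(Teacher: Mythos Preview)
The paper does not contain a proof of this theorem: the Nasseh--Sather-Wagstaff result is quoted in the introduction as a known theorem from \cite{NS}, purely as motivation. The paper's own contribution, Theorem~\ref{th}, proves an analogue for the larger class of $h_2$-vanishing homomorphisms, but with $\mathrm{G}_C$-dim replaced by the modified notion of derived $C$-$\varphi$-reflexivity* (Definition~\ref{*}); the author explicitly remarks that he does not know whether finiteness of the two invariants is equivalent. Moreover, the proof of Theorem~\ref{th} follows a completely different route from your outline: one unwinds the definition of reflexivity* to obtain a deformation $(Q,E)$ with $\mathrm{pd}_Q(\hat B\otimes_R R')<\infty$, uses the $h_2$-vanishing hypothesis together with Andr\'e--Quillen homology and Avramov's injectivity theorem \cite{Av1} to force $H_2(Q,l,l)=0$, hence $Q$ regular, hence $E$ dualizing, and then descends. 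No Auslander/Bass-class or Tor-rigidity argument appears.

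Your proposal, by contrast, is a plan for attacking the original statement in \cite{NS}. The formal reductions you give --- passing to the completion, taking a Cohen factorization, introducing $E=\mathbf{R}\mathrm{Hom}_{A_r}(C_r,D_r)$, and reformulating ``$C_r$ dualizing'' as ``$E$ perfect'' --- are correct and are indeed in the spirit of that paper. But you stop exactly at the step that carries all the content, namely
\[
\mathrm{Tor}^{A_r}_i(B,E)=0\ \text{for}\ i\gg 0\ \Longrightarrow\ \mathrm{fd}_{A_r}E<\infty,
\]
and you only describe what you ``would try'' (isolate a Koszul-homology vanishing feature of $\phi^r$, work inside Auslander and Bass classes). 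That is a diagnosis of where the difficulty lies, not a proof; so as written the proposal has a genuine gap at the one nontrivial point. Your explanation of why ``infinitely many $r$'' is the right hypothesis is correct, and your remark that the case $C=A$ cannot simply be invoked is also well taken.
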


This result generalizes the ``classical'' case: the Gorenstein dimension of \cite{AB} is the particular case of G$_C$-dim obtained by taking $C=A$ (which is always a semidualizing complex), and a ring $A$ is Gorenstein if and only if the semidualizing complex $A$ is dualizing. \\

A second step in the extension of these results to larger classes of homomorphisms was initiated in \cite{Ma1}. The purpose in that paper was not so much to extend the known results to larger classes of homomorphisms as to understand what a homomorphism must verify in order to be a ``test homomorphism'' for these properties of local rings. In that paper a new class of homomorphisms, the ones with the $h_2$-vanishing property, was introduced. A local homomorphism $f:(A,\mathfrak{m},k)\rightarrow(B,\mathfrak{n},l)$ of noetherian local rings is said to have the $h_2$-vanishing property if the canonical homomorphism between the first Koszul homology modules associated to minimal sets of generators of their maximal ideals
$$H_1(\mathfrak{m})\otimes_kl \rightarrow H_1(\mathfrak{n})$$
is zero. Any contracting endomorphism has a power which has the $h_2$-vanishing property, but $h_2$-vanishing homomorphisms are not necessarily endomorphisms, and they can be defined on rings that do not contain a field. Moreover, unlike the class of contracting endomorphisms, the class of $h_2$-vanishing homomorphisms contains at once the two main test homomorphisms: the Frobenius endomorphism and the canonical epimorphism of a local ring into its residue field.

In order to see, even in the case of an endomorphism, the difference between $h_2$-vanishing and contracting, consider a complete local ring $(A,\mathfrak{m},k)$ and a contracting endomorphism $\phi$ of $A$. We assume for simplicity that $\phi(\mathfrak{m}) = \phi^1(\mathfrak{m})\subset \mathfrak{m}^2$. Take a regular local ring $(R,\mathfrak{n},k)$ of minimal dimension such that $A=R/I$ (i.e., dim $R$ = emb.dim $A$), and a contracting endomorphism $\varphi$ of $R$ making commutative the diagram \cite [3.2.1, 3.2.4]{MR}

$$
\xymatrix{ R \ar[r]^{\varphi} \ar @{->>}[d] & R \ar @{->>}[d] \\
 A \ar[r]^{\phi} & A }
$$
(details can be seen in \cite [Example 3.ii]{Ma1}).

Then the homomorphism induced by $\phi$
$$H_1(\mathfrak{m})\otimes_k{^{\phi}}k \rightarrow H_1(\mathfrak{m})$$
can be identified with the canonical homomorphism induced by $\varphi$
$$I/\mathfrak{n}I\otimes_k{^{\phi}}k \rightarrow I/\mathfrak{n}I.$$
Since $\varphi$ is contracting, by the Artin-Rees lemma some power of it verifies $\varphi^r (I) \subset \mathfrak{n}I$, and so $\phi^r$ has the $h_2$-vanishing property. But the contracting property is not only a condition on the images of $I$, but on the images of $\mathfrak{n}$. For instance, any local homomorphism which factorizes through a regular local ring has the $h_2$-vanishing property.\\

Our purpose in this paper is to extend the above result of Nasseh and Sather-Wagstaff to $h_2$-vanishing homomorphisms. In order to achieve it, instead of working with G$_C$-dim, we consider a different definition, G$^*_C$-dim (see Definition \ref{*}). Both definitions are related in the same way that Gorenstein dimension G-dim is related to upper Gorenstein dimension G*-dim \cite{Ve}, \cite [\S8]{Av2}. They share the usual properties (see Propositions \ref{k} and 3*), but we do not know if the finiteness of G$_C$-dim is equivalent to the finiteness of G$^*_C$-dim.\\

We obtain:

\begin{theorem}{\ref{th}}
\textit{Let $\varphi : A \rightarrow B$ be a local homomorphism and $C$ a semidualizing complex over $A$. Assume that $\varphi$ has the $h_2$-vanishing property. The following conditions are equivalent:\\*
(i) $C$ is a dualizing complex.\\*
(ii) G$^*_C$-dim$(B)<\infty$.}\\
\end{theorem}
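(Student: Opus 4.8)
The plan is to prove the two implications separately, with $(i)\Rightarrow(ii)$ being essentially soft and $(ii)\Rightarrow(i)$ carrying all the weight. For $(i)\Rightarrow(ii)$: if $C$ is dualizing over $A$, then for \emph{every} local homomorphism $\varphi:A\to B$ with $B$ noetherian, the complex $B$ has finite $\mathrm{G}^*_C$-dimension — this should follow from the standard behaviour of dualizing complexes under local homomorphisms (using a Cohen factorization $A\to A'\to\widehat{B}$ of the semi-completion of $\varphi$, after which $C\otimes^{\mathbf L}_A A'$ is dualizing over $A'$ and $\widehat B$ is a finite $A'$-module, hence derived $C\otimes^{\mathbf L}_A A'$-reflexive), together with the ascent/descent properties of $\mathrm{G}^*_C$-dim recorded in Propositions~\ref{k} and~3*. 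Here the $h_2$-vanishing hypothesis is not needed at all.

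The substance is $(ii)\Rightarrow(i)$. Assume $\mathrm{G}^*_C\text{-}\dim(B)<\infty$; I want to conclude $C$ is dualizing, i.e. (by definition of semidualizing, we already have $\mathrm{R}\mathrm{Hom}_A(C,C)\simeq A$ and $H(C)$ bounded and finitely generated) that $C$ has finite injective dimension over $A$. First I would reduce to the complete case: completing preserves both the $h_2$-vanishing property of $\varphi$ (it is a statement about Koszul homology of maximal ideals, unaffected by completion) and the finiteness of $\mathrm{G}^*_C$-dim (base change along the flat local map $A\to\widehat A$), and $C$ is dualizing over $A$ iff $C\otimes^{\mathbf L}_A\widehat A$ is dualizing over $\widehat A$; so assume $A$ complete. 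Then choose a minimal Cohen presentation $A=R/I$ with $(R,\mathfrak n,k)$ regular of dimension $=\mathrm{emb.dim}\,A$. The $h_2$-vanishing hypothesis is exactly what lets me build a suitable surjection onto $A$ — as in the discussion preceding the theorem — so that the canonical map $H_1(\mathfrak m_A)\otimes_k l\to H_1(\mathfrak m_B)$ being zero can be translated, via a Cohen factorization $A\to R'\to B'$ of the completion, into the vanishing of the induced map on the relevant first Koszul/Tate homology, i.e. into the statement that the composite $R\to A\to B'$ factors (up to the relevant homotopy) so that $I$ lands inside $\mathfrak n_{R'}I$.

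The key mechanism, which I expect to be the main obstacle to make rigorous, is this: finiteness of $\mathrm{G}^*_C$-dim$(B)$ means $B$ is "$\mathrm{G}^*_C$-reflexive", so in particular $\mathrm{R}\mathrm{Hom}_A(B,C)$ is a bounded complex and the biduality map $B\to\mathrm{R}\mathrm{Hom}_A(\mathrm{R}\mathrm{Hom}_A(B,C),C)$ is an isomorphism; combined with the $h_2$-vanishing factorization one gets a bound on the Bass numbers / injective dimension. Concretely I would try to show that $\mathrm{G}^*_C\text{-}\dim_A(B)<\infty$ forces $\mathrm{G}^*_C\text{-}\dim_A(k)<\infty$: the $h_2$-vanishing property guarantees that the DG-algebra map on Koszul complexes $K^A\to K^B$ kills $H_1$, so that $k=H_0(K^A)$ sits inside $B$-modules built from $K^B$ in a way that transports finite $\mathrm{G}^*_C$-dimension from $B$ down to $k$ — this is the analogue for $\mathrm{G}^*_C$ of the Nasseh–Sather-Wagstaff descent step, but now using the $h_2$-vanishing property in place of the contracting hypothesis, and it is precisely here that working with $\mathrm{G}^*_C$-dim rather than $\mathrm{G}_C$-dim is forced on us. Once $\mathrm{G}^*_C\text{-}\dim_A(k)<\infty$, the complex $\mathrm{R}\mathrm{Hom}_A(k,C)$ is bounded, so $C$ has finite injective dimension, hence $C$ is dualizing, completing the proof.

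I expect the genuinely hard point to be the descent step $\mathrm{G}^*_C\text{-}\dim_A(B)<\infty\Rightarrow\mathrm{G}^*_C\text{-}\dim_A(k)<\infty$ under only the $h_2$-vanishing hypothesis: one must extract, from the vanishing of a single map on first Koszul homology, enough structure (e.g. a Tate/Koszul resolution comparison, or an explicit DG-module built over $K^B$) to propagate derived $C$-reflexivity from $B$ to $k$, and then verify the $\mathrm{G}^*$-version of the relevant reflexivity is preserved — the upper Gorenstein dimension being less robust than the ordinary one, the ascent/descent lemmas of Propositions~\ref{k} and~3* will have to be invoked carefully, and a Cohen-factorization bookkeeping argument (tracking $I/\mathfrak n I$ as in the introductory example) will be needed to see that $h_2$-vanishing delivers exactly the vanishing required.
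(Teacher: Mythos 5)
Your direction (i) $\Rightarrow$ (ii) is fine and matches the paper: it is soft, needs no $h_2$-vanishing, and reduces via a regular (Cohen) factorization and the ascent of dualizing complexes to Proposition 3* (this is exactly Proposition \ref{4}). The problem is (ii) $\Rightarrow$ (i), where your plan has a genuine gap and also starts from a misreading of what finiteness of G$^*_C$-dim$(B)$ means. You write that it gives boundedness of $\mathbf{R}\mathrm{Hom}_A(B,C)$ and an isomorphism $B \to \mathbf{R}\mathrm{Hom}_A(\mathbf{R}\mathrm{Hom}_A(B,C),C)$; but $B$ need not lie in $\textbf{D}_b^f(A)$ at all (think of a non-finite Frobenius), which is precisely why Definition \ref{*} is relative: it hands you a regular factorization $A \to R \to \hat{B}$, a weakly regular $R \to R'$, and a $C\otimes_AR'$-deformation $(Q,E)$ of $R'$ with pd$_Q(\hat{B}\otimes_RR')<\infty$. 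Your entire argument then hinges on the ``descent step'' G$^*_C$-dim$(B)<\infty \Rightarrow$ G$^*_C$-dim$(k)<\infty$, which you yourself flag as the main obstacle and for which you offer only a heuristic (the Koszul DG map $K^A\to K^B$ killing $H_1$ should ``transport'' reflexivity down to $k$). No mechanism is given for that transport, and it is not how the implication is actually proved; as it stands the proposal proves neither this step nor the theorem.

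The paper's argument avoids any descent to $k$ and instead works directly with the witness data $(Q,E)$: the $h_2$-vanishing property says $H_2(A,l,l)\to H_2(B,l,l)$ vanishes (Andr\'e--Quillen homology), weakly regular maps induce isomorphisms on $H_2(-,l,l)$ \cite[Lemma 5]{Ma1}, so by chasing the commutative squares coming from $A\to R\to\hat{B}$ and $R\to R'$ one finds that $H_2(R',l,l)\to H_2(\hat{B}\otimes_RR',l,l)$ is zero; on the other hand pd$_Q(\hat{B}\otimes_RR')<\infty$ forces $H_2(Q,l,l)\to H_2(\hat{B}\otimes_RR',l,l)$ to be \emph{injective} by Avramov's descent of deviations \cite{Av1}. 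Hence $H_2(Q,l,l)=0$ and $Q$ is regular \cite[6.26]{An}, so the semidualizing complex $E$ has finite injective dimension and is dualizing; then $C\otimes_AR'\sim \mathbf{R}\mathrm{Hom}_Q(R',E)$ is dualizing and flat descent along $A\to R'$ gives that $C$ is dualizing. The key external input you are missing is exactly this injectivity theorem of Avramov, which converts the finite projective dimension built into the definition of reflexivity* plus the $h_2$-vanishing into regularity of $Q$; without it (or a substitute of comparable strength) your outline cannot be completed.
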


A note on terminology. Since we are interested only in the finiteness of G$_C$-dim and not in its precise value, we use the terminology of derived $C$-reflexivity instead of finite G$_C$-dim.

\section{Notation for complexes}

\textit{All rings in this paper will be noetherian and local.}
\\

We will follow the conventions for complexes generally used in this context (see e.g. \cite {Ch}). For convenience of the reader we will briefly recall some notation. Let $A$ be a ring. A complex of $A$-modules will be a sequence of $A$-module homomorphisms
$$X = ... \rightarrow X_{n+1} \xrightarrow{d_{n+1}} X_{n} \xrightarrow{d_{n}} X_{n-1} \rightarrow ...$$
such that $d_nd_{n+1}=0$ for all $n$. If $m$ is an integer, $\Sigma^mX$ will be the complex with $(\Sigma^mX)_n = X_{n-m}$, $d_n^{\Sigma^mX} = (-1)^md_{n-m}^X$ for all $n$.

The derived category of the category of $A$-modules will be denoted by $\textbf{D}(A)$. For $X,Y \in \textbf{D}(A)$, we will write $X \simeq Y$ if $X$ and $Y$ are isomorphic in $\textbf{D}(A)$, and $X \sim Y$ if $X \simeq \Sigma^mY$ for some integer $m$. Sometimes we will consider an $A$-module as a complex concentrated in degree $0$. The full subcategory of $\textbf{D}(A)$ consisting of complexes homologically finite, that is, complexes $X$ such that $H(X)$ is an $A$-module of finite type, will be denoted by $\textbf{D}_b^f(A)$.

The left derived functor of the tensor product of complexes of $A$-modules will be denoted by $- \otimes^{\textbf{L}}_A -$, and similarly $\textbf{R}{\rm Hom}_A(-,-)$ will denote the right derived functor of the Hom functor on complexes of $A$-modules. We say that a complex $X\in \textbf{D}(A)$ is of finite projective (respectively, injective) dimension if there exists a bounded complex $Y$ (that is, $Y_n=0$ for $|n| \gg 0$) of projective (respectively, injective) modules such that $X \simeq Y$. We will denote it by pd$_A(X) < \infty$ (respectively, id$_A(X) < \infty$).

\section{Derived reflexivity}

Let $X,C \in \textbf{D}_b^f(A)$. We say that $X$ is \textit{derived $C$-reflexive} if $\textbf{R}{\rm Hom}_A(X,C) \in \textbf{D}_b^f(A)$ and the canonical biduality morphism

$$X \rightarrow \textbf{R}{\rm Hom}_A(\textbf{R}{\rm Hom}_A(X,C),C)$$
is an isomorphism in $\textbf{D}_b^f(A)$ \cite [2.7]{Ch}, \cite [\S2]{AIL}.

We will say that $C \in \textbf{D}_b^f(A)$ is a \textit{semidualizing complex} \cite [Definition 2.1]{Ch} if $A$ is derived $C$-reflexive, that is, if the homothety morphism

$$A \rightarrow \textbf{R}{\rm Hom}_A(C,C)$$
is an isomorphism in $\textbf{D}_b^f(A)$. If $C$ is a semidualizing complex and $X \in \textbf{D}_b^f(A)$, then $X$ is derived $C$-reflexive if and only if $X \simeq \textbf{R}{\rm Hom}_A(\textbf{R}{\rm Hom}_A(X,C),C)$ \cite [Theorem 3.3]{AIL}. We will give precise references of all the results we need on derived reflexivity, but the reader may consult \cite{Ch}, \cite{FS} for a systematic study.

A \textit{dualizing} complex is a semidualizing complex of finite injective dimension.

We now introduce a modification of derived $C$-reflexivity. We call it derived $C$-reflexivity*, since it is related to derived $C$-reflexivity in the same way that upper Gorenstein dimension G*-dim is related to Gorenstein dimension G-dim \cite {Ve}, \cite [\S8]{Av2}.

A local homomorphism $(A,\mathfrak{m},k)\rightarrow(R,\mathfrak{p},l)$ is \textit{weakly regular} if it is flat and the closed fiber $R\otimes_Ak$ is a regular local ring. Let $f:(A,\mathfrak{m},k)\rightarrow(B,\mathfrak{n},l)$ be a local homomorphism. A \textit{regular factorization} of $f$ is a factorization $A\xrightarrow{i} R \xrightarrow{p} B$ of $f$ where $i$ is weakly regular and $p$ is surjective. If $B$ is complete, then $f$ has a regular factorization with $R$ complete \cite {AFH}.

\begin{defn}\label{*}
If $C$ is a semidualizing complex over a ring $A$, a \textit{$C$-deformation} of $A$ will be a pair $(Q,E)$ consisting in a surjective homomorphism of (local) rings $Q \rightarrow A$ and a semidualizing complex $E \in \textbf{D}_b^f(Q)$ such that $\textbf{R}{\rm Hom}_Q(A,E) \sim C$. In this case, by \cite [Theorem 6.1 and Observation 2.4]{Ch}, the $Q$-module $A$ is derived $E$-reflexive.

Let $C$ be a semidualizing complex over $A$, and $X \in \textbf{D}_b^f(A)$. We will say that $X$ is \textit{derived $C$-reflexive*} if there exists a weakly regular homomorphism $A \rightarrow A'$ and a $C \otimes_AA'$-deformation $(Q,E)$ of $A'$ (note that $C \otimes_AA' = C \otimes^{\textbf{L}}_AA'$ is a semidualizing complex over $A'$ \cite [Theorem 5.6]{Ch}) such that pd$_Q(X\otimes_AA')<\infty$.

Let $\varphi : A \rightarrow B$ be a local homomorphism, $C$ be a semidualizing complex over $A$, $X \in \textbf{D}_b^f(B)$. We will say that $X$ is \textit{derived $C$-$\varphi$-reflexive*} if there exists a regular factorization $A \rightarrow R \rightarrow \hat{B}$ such that the complex of $R$-modules $X\otimes_B\hat{B}$ is derived $C \otimes_AR$-reflexive*, where $\hat{B}$ is the completion of $B$.

\end{defn}

\begin{prop}\label{rr}
Let $C$ be a semidualizing complex over $A$, and $X \in \textbf{D}_b^f(A)$. If $X$ is derived $C$-reflexive*, then it is derived $C$-reflexive.
\end{prop}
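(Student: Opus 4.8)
The plan is to unwind the definition of derived $C$-reflexivity* and transport the finiteness of projective dimension through the two maps that appear there---the weakly regular base change $A\to A'$ and the $C\otimes_AA'$-deformation $Q\to A'$---using in each case the standard compatibility of $\mathbf{R}\mathrm{Hom}$ and biduality with such maps. So suppose $X$ is derived $C$-reflexive*, and fix a weakly regular $A\to A'$ and a $C\otimes_AA'$-deformation $(Q,E)$ of $A'$ with $\mathrm{pd}_Q(X\otimes_AA')<\infty$.

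First I would handle the deformation step. Write $C':=C\otimes_AA'$ and $X':=X\otimes_AA'$. Since $\mathrm{pd}_Q(X')<\infty$ and $A$ is derived $E$-reflexive over $Q$ (this is recorded in Definition \ref{*}, via \cite[Theorem 6.1 and Observation 2.4]{Ch}), a base-change/adjunction argument should show $X'$ is derived $\mathbf{R}\mathrm{Hom}_Q(A',E)$-reflexive over $A'$; and $\mathbf{R}\mathrm{Hom}_Q(A',E)\sim C'$ by the definition of $C'$-deformation. Concretely, for a complex of finite projective dimension over $Q$ one has $\mathbf{R}\mathrm{Hom}_Q(X',E)\simeq \mathbf{R}\mathrm{Hom}_{A'}(X', \mathbf{R}\mathrm{Hom}_Q(A',E))$ after restricting scalars, and iterating this together with $\mathrm{pd}$-finiteness gives homological finiteness of $\mathbf{R}\mathrm{Hom}_{A'}(X',C')$ and the biduality isomorphism over $A'$. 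Reflexivity is insensitive to the shift in $\sim$, so $X'$ is derived $C'$-reflexive over $A'$.

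Next I would descend along the weakly regular map $A\to A'$. Here $X' = X\otimes^{\mathbf L}_A A'$ and $C' = C\otimes^{\mathbf L}_A A'$, and $A\to A'$ is faithfully flat, so $\mathbf{R}\mathrm{Hom}_A(X,C)\otimes^{\mathbf L}_A A'\simeq \mathbf{R}\mathrm{Hom}_{A'}(X',C')$ (using that $X\in\mathbf D^f_b(A)$). Faithful flatness then forces $\mathbf{R}\mathrm{Hom}_A(X,C)\in\mathbf D^f_b(A)$ from its finiteness over $A'$, and likewise the cone of the biduality morphism $X\to\mathbf{R}\mathrm{Hom}_A(\mathbf{R}\mathrm{Hom}_A(X,C),C)$ becomes zero after $-\otimes^{\mathbf L}_AA'$, hence is already zero. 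Therefore $X$ is derived $C$-reflexive over $A$.

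The main obstacle is the deformation step: one must be careful that $X'$ really is homologically finite as an $A'$-complex (it is, since $X\in\mathbf D^f_b(A)$ and $A'$ is noetherian) and that the chain of $\mathbf{R}\mathrm{Hom}$ identities legitimately converts $\mathrm{pd}_Q(X')<\infty$ into the two conditions defining derived $C'$-reflexivity over $A'$---rather than merely into some weaker finiteness. I expect this to follow from the behaviour of reflexivity under a surjection $Q\to A'$ when the source module has finite projective dimension, which is exactly the content of the results of \cite{Ch} cited in Definition \ref{*}; the faithfully flat descent in the last step is routine.
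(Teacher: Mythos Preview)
Your proposal is correct and follows essentially the same three-step outline as the paper: (1) use $\mathrm{pd}_Q(X')<\infty$ to get derived $E$-reflexivity over $Q$, (2) pass through the surjection $Q\to A'$ to obtain derived $C'$-reflexivity over $A'$, and (3) descend along the faithfully flat map $A\to A'$. The only difference is packaging: where you sketch the adjunction and biduality computations by hand, the paper simply cites \cite[Proposition~2.9]{Ch} for step~(1), \cite[Theorem~6.5]{Ch} for step~(2), and \cite[Theorem~5.10]{Ch} for step~(3).
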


\begin{proof}
Let $A \rightarrow A'$ be a weakly regular homomorphism, $(Q,E)$ a $C':=C \otimes_AA'$-deformation of $A'$ such that pd$_Q(X\otimes_AA')<\infty$. By \cite [Proposition 2.9]{Ch}, $X\otimes_AA'$ is derived $E$-reflexive and then, by \cite [Theorem 6.5]{Ch}, $X\otimes_AA'$ is derived $C'$-reflexive. Then faithfully flat base change \cite [Theorem 5.10]{Ch} gives that $X$ is derived $C$-reflexive.
\end{proof}

We do not know if the reciprocal of Proposition \ref{rr} is true, even in the (classical) case $C=A$. However the usual characterization of dualizing complexes in terms of derived reflexivity of the residue field also remain valid for derived reflexivity* (in the case $C=A$ this is the theorem by Auslander and Bridger saying that a ring $A$ is Gorenstein if and only if the Gorenstein dimension of any module of finite type is finite if and only if the Gorenstein dimension of its residue field is finite \cite [Theorem 4.20 and its proof]{AB}; see also \cite [Theorem 6.1]{IS}):

\begin{prop}\label{k} \cite [Proposition 8.4, Remark 8.5]{Ch} Let $C$ be a semidualizing complex over $A$. The following are equivalent:\\*
(i) $C$ is dualizing.\\*
(ii) Any $X \in \textbf{D}_b^f(A)$ is derived $C$-reflexive.\\*
(iii) The residue field $k$ of $A$ is derived $C$-reflexive.
\end{prop}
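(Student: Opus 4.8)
The plan is to prove the cycle $(i)\Rightarrow(ii)\Rightarrow(iii)\Rightarrow(i)$; the implication $(ii)\Rightarrow(iii)$ is immediate since $k\in\mathbf{D}_b^f(A)$. For $(i)\Rightarrow(ii)$, let $C$ be dualizing and $X\in\mathbf{D}_b^f(A)$. First one checks $\mathbf{R}\mathrm{Hom}_A(X,C)\in\mathbf{D}_b^f(A)$: homological boundedness is forced by $\mathrm{id}_A(C)<\infty$, and homological finiteness is a standard computation (truncate a minimal free resolution of $X$ and pair it against a bounded complex of injectives representing $C$). The remaining point is that the biduality morphism $\eta_X\colon X\to\mathbf{R}\mathrm{Hom}_A(\mathbf{R}\mathrm{Hom}_A(X,C),C)$ is an isomorphism. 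The full subcategory of $\mathbf{D}_b^f(A)$ on which $\eta$ is an isomorphism is triangulated and closed under retracts, and it contains $A$ — this is exactly the semidualizing hypothesis; using the soft truncations of $X$ one reduces to the case of a finitely generated module $M$. For a module one argues by Noetherian induction: $\mathbf{R}\mathrm{Hom}$ commutes with localization and $C_{\mathfrak p}$ is dualizing over $A_{\mathfrak p}$, so $\eta_{M_{\mathfrak p}}$ is an isomorphism for every $\mathfrak p\neq\mathfrak m$, whence $\mathrm{cone}(\eta_M)$ has finite-length homology and therefore lies in the thick subcategory generated by $k$; one then finishes with the structural fact that $\mathbf{R}\mathrm{Hom}_A(k,C)\sim k$ for a dualizing $C$ (equivalently, $\eta_k$ is an isomorphism — for $\dim A=0$ this is just Matlis duality). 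In substance this direction is Grothendieck--Hartshorne duality.

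For $(iii)\Rightarrow(i)$, suppose $k$ is derived $C$-reflexive. Then $\mathbf{R}\mathrm{Hom}_A(k,C)\in\mathbf{D}_b^f(A)$, so in particular it is homologically bounded below. I would then invoke the complex version of Bass's characterization of injective dimension: for $Y\in\mathbf{D}_b^f(A)$ one has $\mathrm{id}_A(Y)<\infty$ if and only if $\mathbf{R}\mathrm{Hom}_A(k,Y)$ is homologically bounded below (equivalently, $\mathrm{id}_A(Y)=-\inf\{n:H_n(\mathbf{R}\mathrm{Hom}_A(k,Y))\neq 0\}$). Applying this with $Y=C$ gives $\mathrm{id}_A(C)<\infty$, and since $C$ is semidualizing by the standing hypothesis, $C$ is dualizing. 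Note that the biduality half of ``$k$ derived $C$-reflexive'' plays no role here: only boundedness of $\mathbf{R}\mathrm{Hom}_A(k,C)$ is used.

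The main obstacle is the non-formal input behind $(iii)\Rightarrow(i)$, namely the detection of finite injective dimension of a complex by a single $\mathbf{R}\mathrm{Hom}$ against the residue field; for modules this is Bass's theorem, and its extension to complexes is available in the literature (Foxby; Avramov--Foxby on homological dimensions of unbounded complexes), but it is the real content of the argument. A secondary subtlety, on the $(i)\Rightarrow(ii)$ side, is the passage from finite-length modules to arbitrary homologically finite complexes: a dévissage starting from $A$ alone reaches only the perfect complexes, so one genuinely needs $\mathrm{id}_A(C)<\infty$ — exploited through the localization and finite-length reduction above — rather than merely the semidualizing property of $C$.
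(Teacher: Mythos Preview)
The paper does not give a proof of this proposition: it is stated with a citation to \cite[Proposition 8.4, Remark 8.5]{Ch} and used as a black box. So there is nothing to compare your argument against in the paper itself.

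Your outline is essentially the standard one and is correct in broad strokes. A couple of remarks on the details. For $(iii)\Rightarrow(i)$ your argument is clean and complete: the only input is the complex version of Bass's criterion, $\mathrm{id}_A(C)<\infty \iff \mathbf{R}\mathrm{Hom}_A(k,C)$ is homologically bounded, and you correctly note that the biduality half of the hypothesis is not needed. For $(i)\Rightarrow(ii)$ your d\'evissage is a bit roundabout. The step ``$\mathrm{cone}(\eta_M)$ has finite-length homology, hence lies in the thick subcategory generated by $k$'' does not by itself give $\eta_M$ an isomorphism; what you really want is that the double-dual functor $\mathbf{R}\mathrm{Hom}_A(\mathbf{R}\mathrm{Hom}_A(-,C),C)$ is, thanks to $\mathrm{id}_A(C)<\infty$, way-out in both directions on $\mathbf{D}_b^f(A)$, so by the lemma on way-out functors (Hartshorne, \cite[I.7.1]{RD}) it suffices to check $\eta$ on finitely generated modules, and then a free-resolution/truncation argument reduces to $\eta_A$. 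Alternatively, one can argue exactly as you do but phrase it as induction on $\dim A$, using that in dimension $0$ every object of $\mathbf{D}_b^f(A)$ lies in the thick subcategory generated by $k$, and that $\mathbf{R}\mathrm{Hom}_A(k,C)\sim k$ for dualizing $C$ (which, as you say, is Matlis duality in dimension $0$ and in general is part of the package of equivalent characterizations in \cite{Ch}). Either way the ``structural fact'' $\mathbf{R}\mathrm{Hom}_A(k,C)\sim k$ is the real content, and you are right to flag it.
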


\begin{proposition}{\ref{k}*}  Let $C$ be a semidualizing complex over $A$. The following are equivalent:\\*
(i) $C$ is dualizing.\\*
(ii*) Any $X \in \textbf{D}_b^f(A)$ is derived $C$-reflexive*.\\*
(iii*) The residue field $k$ of $A$ is derived $C$-reflexive*.
\end{proposition}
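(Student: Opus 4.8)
**Proof proposal for Proposition \ref{k}*.**

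The plan is to mimic the logical skeleton of Proposition \ref{k}, but to upgrade "derived $C$-reflexive" to "derived $C$-reflexive*" at each stage, while at the two delicate points—the implication (i)$\Rightarrow$(ii*) and the passage (iii*)$\Rightarrow$(i)—importing the structural input that the starred notion carries (weakly regular base change and a $C$-deformation of finite projective dimension). The implications (ii*)$\Rightarrow$(iii*) is immediate since $k \in \textbf{D}_b^f(A)$, and (ii*)$\Rightarrow$(ii), (iii*)$\Rightarrow$(iii) are instances of Proposition \ref{rr}; so the real content is (i)$\Rightarrow$(ii*) and (iii*)$\Rightarrow$(i).

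For (i)$\Rightarrow$(ii*): assume $C$ is dualizing. Here I would exhibit an explicit witness for the starred reflexivity of an arbitrary $X \in \textbf{D}_b^f(A)$. Pass to the completion $\hat A$ via the weakly regular map $A \to \hat A$ (its closed fiber is the field $k$, hence regular); note $C \otimes_A \hat A$ is a dualizing complex over $\hat A$ by Proposition \ref{rr}-type base change, i.e.\ \cite[Theorem 5.6 and Theorem 5.10]{Ch}. Now write $\hat A = Q/I$ with $(Q,\mathfrak q,k)$ a complete regular local ring (Cohen's structure theorem), so that $Q \to \hat A$ is surjective. Since $Q$ is regular, $Q$ itself is a dualizing complex over $Q$, so $E := Q$ is a semidualizing complex and $\textbf{R}{\rm Hom}_Q(\hat A, E)$ is a dualizing complex over $\hat A$; by uniqueness of dualizing complexes up to shift \cite[Observation 2.4]{Ch}, $\textbf{R}{\rm Hom}_Q(\hat A, E) \sim C \otimes_A \hat A$, so $(Q,E)$ is a $(C\otimes_A\hat A)$-deformation of $\hat A$. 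Finally $\mathrm{pd}_Q(X \otimes_A \hat A) < \infty$ because $Q$ is regular and $X \otimes_A \hat A \in \textbf{D}_b^f(Q)$ (every homologically finite complex over a regular local ring has finite projective dimension). This is exactly the data required by Definition \ref{*}, so $X$ is derived $C$-reflexive*.

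For (iii*)$\Rightarrow$(i): by Proposition \ref{rr}, $k$ being derived $C$-reflexive* implies it is derived $C$-reflexive, and then Proposition \ref{k}, (iii)$\Rightarrow$(i), already gives that $C$ is dualizing. So in fact this direction is free once Proposition \ref{rr} and Proposition \ref{k} are in hand. The main obstacle is therefore concentrated in (i)$\Rightarrow$(ii*): one must be careful that the chosen factorization data genuinely lives in the derived category with the right shifts, that $C \otimes_A \hat A = C \otimes^{\textbf L}_A \hat A$ (flatness of completion makes this harmless), and that "dualizing base-changes to dualizing along a weakly regular map" is applied correctly—this is where \cite[Theorem 5.6, Theorem 5.10]{Ch} and the finite-injective-dimension half of the definition of dualizing are used. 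Once that bookkeeping is done the proof is essentially a rephrasing of the classical Auslander–Bridger argument \cite[Theorem 4.20]{AB} in the language of deformations.
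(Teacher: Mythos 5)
Your proposal is correct and follows essentially the same route as the paper: reduce everything to (i)$\Rightarrow$(ii*) via Propositions \ref{rr} and \ref{k}, then witness reflexivity* by taking $A'=\hat{A}$, a Cohen presentation $Q\twoheadrightarrow\hat{A}$ with $Q$ regular, $E\sim Q$ as dualizing complex, and uniqueness of dualizing complexes up to shift to get $\textbf{R}{\rm Hom}_Q(\hat{A},E)\sim C\otimes_A\hat{A}$, with regularity of $Q$ giving ${\rm pd}_Q(X\otimes_A\hat{A})<\infty$. The only cosmetic difference is in the references (the paper cites \cite[V.3.5, V.3.1]{RD} for base change of dualizing complexes and their uniqueness up to shift, where you cite \cite{Ch}), which does not affect the argument.
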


\begin{proof} By Propositions \ref{rr} and \ref{k}, we only have to show (i) $\Rightarrow$ (ii*). Let $\hat{A}$ be the completion of $A$ and $Q \rightarrow \hat{A}$ a surjection where $Q$ is a regular local ring. Let $D$ be a dualizing complex over $Q$ ($D \sim Q$). Then $\textbf{R}{\rm Hom}_Q(\hat{A},D)$ is a dualizing complex over $\hat{A}$ (\cite [V.2.4]{RD} or \cite [Corollary 6.2]{Ch}). Also, $C \otimes_A\hat{A}$ is a dualizing complex over $\hat{A}$ (\cite [V.3.5]{RD}), so $\textbf{R}{\rm Hom}_Q(\hat{A},D) \sim C \otimes_A\hat{A}$ by \cite [V.3.1]{RD}.

Therefore $(Q,D)$ is a $C \otimes_A\hat{A}$-deformation of $\hat{A}$. Since $Q$ is regular, for any $X \in \textbf{D}_b^f(A)$ we have pd$_Q(X\otimes_A \hat{A})<\infty$, and so $X$ is derived $C$-reflexive*.

\end{proof}

This result still holds for derived $C$-$\varphi$-reflexivity*:

\begin{prop}\label{4}
Let $C$ be a semidualizing complex over $A$. The following are equivalent:\\*
(i) $C$ is dualizing.\\*
(ii) For any local homomorphism $\varphi: A \rightarrow B$, any $X \in \textbf{D}_b^f(B)$ is derived $C$-$\varphi$-reflexive*.\\*
(iii) There exists a local homomorphism $\varphi: A \rightarrow B$, such that the residue field $l$ of $B$ is derived $C$-$\varphi$-reflexive*.
\end{prop}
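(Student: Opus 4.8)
The plan is to prove the cycle $(\mathrm{i})\Rightarrow(\mathrm{ii})\Rightarrow(\mathrm{iii})\Rightarrow(\mathrm{i})$. The implication $(\mathrm{ii})\Rightarrow(\mathrm{iii})$ is immediate: take $\varphi$ to be the identity homomorphism of $A$ and $X=k$, so that the residue field of $A$ is derived $C$-$\varphi$-reflexive*.

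For $(\mathrm{i})\Rightarrow(\mathrm{ii})$, let $\varphi\colon A\to B$ be a local homomorphism and $X\in\textbf{D}_b^f(B)$. Since $\hat B$ is complete, the composite $A\to B\to\hat B$ has a regular factorization $A\xrightarrow{i}R\xrightarrow{p}\hat B$ with $R$ complete \cite{AFH}, $i$ weakly regular and $p$ surjective. The point is that $C\otimes_AR$ is then a \emph{dualizing} complex over $R$: it is semidualizing over $R$ by \cite[Theorem 5.6]{Ch}, and it has finite injective dimension over $R$ since the closed fiber $R\otimes_Ak$ of $i$ is regular---hence of finite injective dimension over itself---so that $\mathrm{id}_R(C\otimes_AR)$ is finite by base change of injective dimension along the flat local homomorphism $i$ (cf. \cite[V.3.5]{RD}). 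Because $p$ is surjective, $\hat B$---and therefore $X\otimes_B\hat B$---lies in $\textbf{D}_b^f(R)$, so by Proposition \ref{k}* (the implication (i) $\Rightarrow$ (ii*) applied over $R$ to the dualizing complex $C\otimes_AR$) the complex $X\otimes_B\hat B$ is derived $(C\otimes_AR)$-reflexive* over $R$. By the definition of $C$-$\varphi$-reflexivity*, this says that $X$ is derived $C$-$\varphi$-reflexive*.

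For $(\mathrm{iii})\Rightarrow(\mathrm{i})$, start from a homomorphism $\varphi\colon A\to B$ as in $(\mathrm{iii})$ and a regular factorization $A\to R\to\hat B$ witnessing that $l\otimes_B\hat B$ is derived $(C\otimes_AR)$-reflexive* over $R$. Now $l\otimes_B\hat B=l$ is the residue field of $\hat B$, which coincides with that of $R$ since $R\to\hat B$ is a surjective local homomorphism. Proposition \ref{rr}, applied over $R$ to the semidualizing complex $C\otimes_AR$, shows that the residue field of $R$ is derived $(C\otimes_AR)$-reflexive, hence $C\otimes_AR$ is dualizing over $R$ by Proposition \ref{k} ((iii) $\Rightarrow$ (i)). Running Proposition \ref{k} in the direction (i) $\Rightarrow$ (ii) over $R$, the $R$-module $R\otimes_Ak$---which equals $k\otimes^{\textbf{L}}_AR$ because $i$ is flat---is derived $(C\otimes_AR)$-reflexive over $R$; since $i$ is faithfully flat, faithfully flat base change \cite[Theorem 5.10]{Ch} then yields that $k$ is derived $C$-reflexive over $A$, and so $C$ is dualizing over $A$ by Proposition \ref{k}.

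The step that I expect to need the most care is the ascent of the dualizing property along the weakly regular homomorphism $i\colon A\to R$ in $(\mathrm{i})\Rightarrow(\mathrm{ii})$---this is where the hypothesis that the closed fiber is regular enters, through the base change formula for injective dimension, which I would quote from \cite{RD} (or \cite{Ch}). After that, the argument is bookkeeping: an assembly of Propositions \ref{rr}, \ref{k} and \ref{k}* together with faithfully flat base change for derived reflexivity.
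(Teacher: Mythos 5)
Your proposal is correct, and for (i) $\Rightarrow$ (ii) it is essentially the paper's argument: take a regular factorization $A\to R\to\hat B$ of $A\to\hat B$ with $R$ complete, note that $C\otimes_AR$ is dualizing over $R$ because the closed fiber of the weakly regular map is regular (the paper cites \cite[Theorem 5.1, Proposition 4.2]{AF1} for this ascent, which is the cleaner reference than your ``cf.\ \cite{RD}'' sketch via base change of injective dimension, though the content is the same), and then invoke Proposition \ref{k}* over $R$. Where you genuinely diverge is (iii) $\Rightarrow$ (i). The paper unfolds the definition of reflexivity* over $R$: it takes the weakly regular $R\to R'$ and the $C\otimes_AR'$-deformation $(Q,E)$ with $\mathrm{pd}_Q(l\otimes_RR')<\infty$, proves that $Q$ is regular via a change-of-rings spectral sequence (using that the closed fiber $l\otimes_RR'$ is regular), deduces $\mathrm{id}_Q(E)<\infty$, hence $E$ and then $C\otimes_AR'\sim\textbf{R}{\rm Hom}_Q(R',E)$ are dualizing, and finally descends along the flat map $A\to R'$. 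You instead observe that the hypothesis says exactly that the residue field of $R$ is derived $(C\otimes_AR)$-reflexive*, and short-circuit through Propositions \ref{rr} and \ref{k} (equivalently, Proposition \ref{k}* in the direction (iii*) $\Rightarrow$ (i)) applied over $R$, then descend via \cite[Theorem 5.10]{Ch}; your explicit descent argument (reflexivity of $k\otimes^{\textbf{L}}_AR$ plus faithfully flat descent) also makes precise the paper's ``it is easy to see'' step. Your shortcut is valid and arguably cleaner, since it reuses results already established and avoids the regularity-of-$Q$ argument; what the paper's longer route buys is a template that is reused verbatim in the proof of Theorem \ref{th}, where the reflexivity* hypothesis concerns $B$ rather than the residue field, so the appeal to Proposition \ref{k} is unavailable there and regularity of $Q$ must be extracted from the $h_2$-vanishing hypothesis instead.
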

\begin{proof}
(i) $\Rightarrow$ (ii) Let $\varphi: A \rightarrow B$ be a local homomorphism and let $A \rightarrow R \rightarrow \hat{B}$ be a regular factorization with $R$ complete. Since $C$ is a dualizing complex over $A$ and $i$ is flat with Gorenstein (in fact regular) closed fiber, then $C \otimes_A R$ is a dualizing complex over $R$  \cite [Theorem 5.1, Proposition 4.2]{AF1}. Therefore the result follows from Proposition 3*.\\
(iii) $\Rightarrow$ (i) Let $A\xrightarrow{i} R \xrightarrow{p} \hat{B}$ be a regular factorization, $R \rightarrow R'$ a weakly regular homomorphism, and $(Q,E)$ a $C \otimes_A R'$-deformation of $R'$ such that pd$_Q(l\otimes_RR')<\infty$. Since $R \rightarrow R'$ is weakly regular, its closed fiber $l\otimes_RR'$ is regular. Then $Q$ is a regular local ring (it follows e.g. from the change of rings spectral sequence
$$E^2_pq=Tor^{l\otimes_RR'}_p(Tor^Q_q(l\otimes_RR',l'),l') \Rightarrow Tor^Q_q(l',l')$$
where $l'$ is the residue field of $Q$ and $l\otimes_RR'$).

We deduce that id$_Q(E)<\infty$, and so the semidualizing complex $E$ is dualizing. Then $C \otimes_AR' \sim \textbf{R}{\rm Hom}_Q(R',E)$ is also dualizing \cite [V.2.4]{RD}. Since $A \rightarrow R'$ is flat, it is easy to see that $C$ is dualizing (or use the stronger result \cite [Theorem 5.1]{AF1}).

\end{proof}

\begin{defn}\label{h2} \cite [Definition 1]{Ma1}
Let $f:(A,\mathfrak{m},k)\rightarrow(B,\mathfrak{n},l)$ be a local homomorphism. Let $H_*(\mathfrak{m})$ (respectively, $H_*(\mathfrak{n})$) be the Koszul homology associated to a minimal system of generators of the ideal $\mathfrak{m}$ of $A$ (respectively, the ideal $\mathfrak{n}$ of $B$). We say that $f$ has the \textit{$h_2$-vanishing property} if the canonical homomorphism induced by $f$ \\
$$H_1(\mathfrak{m})\otimes_kl \rightarrow H_1(\mathfrak{n})$$
vanishes.
\end{defn}

By \cite [15.12]{An} (see  \cite [2.5.1]{MR}), this homomorphism  between Koszul homology modules can be written in terms of Andr\'e-Quillen homology \cite{An} as the canonical homomorphism
$$H_2(A,k,l) \rightarrow H_2(B,l,l).$$

As we saw in the Introduction, a suitable power of any contracting endomorphism has the $h_2$-vanishing property (in fact, if $f:(A,\mathfrak{m},k)\rightarrow (A,\mathfrak{m},k)$ is a contracting endomorphism, for any integer $n$ there exists an integer $s$ such that $f^s$ has the $h_n$-vanishing property, in the sense that the morphism of functors $H_n(A,k,-) \rightarrow H_n(A,k,-)$ vanishes \cite [Proposition 10]{Ma2}).

\begin{thm}\label{th}
Let $\varphi : A \rightarrow B$ be a local homomorphism and $C$ a semidualizing complex over $A$. Assume that $\varphi$ has the $h_2$-vanishing property. If (and only if) $B$ is derived $C$-$\varphi$-reflexive*, then $C$ is dualizing.
\end{thm}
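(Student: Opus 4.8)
The non-trivial implication is "$B$ derived $C$-$\varphi$-reflexive* $\Rightarrow$ $C$ dualizing" (the converse is contained in Proposition \ref{4}). By Proposition \ref{4}(iii), it suffices to produce \emph{some} local homomorphism whose residue field is derived $C$-reflexive*; the most economical choice is to show that the residue field $l$ of $B$ is derived $C$-$\varphi$-reflexive*, so that Proposition \ref{4} applies directly to $\varphi$ itself. So the real content is: passing from derived $C$-$\varphi$-reflexivity* of the module $B$ to that of the module $l=B/\mathfrak n$, using the $h_2$-vanishing hypothesis.

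**Reduction to a statement over a regular factorization.** Unwinding the definition, being derived $C$-$\varphi$-reflexive* means: there is a regular factorization $A\xrightarrow{i} R\xrightarrow{p}\hat B$, a weakly regular $R\to R'$, and a $C\otimes_A R'$-deformation $(Q,E)$ of $R'$ with $\mathrm{pd}_Q(\hat B\otimes_R R')<\infty$. I want to upgrade this to $\mathrm{pd}_Q(l\otimes_R R')<\infty$ (with possibly a \emph{different} regular factorization / deformation, but I expect the same one to work after the $h_2$-manoeuvre). Since $Q\twoheadrightarrow R'$ and $R'\twoheadrightarrow \hat B\otimes_R R'=:\bar B$ with $\bar B$ having residue field $l'':=$ residue field of $\bar B$ (which is $l$ since completion and the weakly regular base change can be arranged to have trivial residue field extension — or one absorbs the residue field extension and works with $l\otimes$ it), the question becomes: $\mathrm{pd}_Q(\bar B)<\infty$ together with the $h_2$-vanishing property of the composite $A\to B\to \bar B$-type map forces $\mathrm{pd}_Q(l)<\infty$. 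Here $l$ is the common residue field of $Q$, $R'$, $\bar B$.

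**The $h_2$ input.** This is the heart, and it is where I expect the main obstacle. The point of the $h_2$-vanishing property, reformulated via André--Quillen homology as $H_2(A,k,l)\to H_2(B,l,l)$ being zero, is that in a regular factorization $A\to R\to B$ the map on the relevant conormal/Koszul homology "$I/\mathfrak m_R I\to \mathfrak n$-Koszul homology" dies; concretely, choosing $R$ of minimal embedding dimension over $A$ and $Q$ likewise over $\bar B$, the second Koszul homology of the defining ideal of $\bar B$ in $Q$ receives zero from that of $A$. The mechanism I would use — following the strategy of \cite{Ma1} and \cite{NS} — is: finite projective dimension of $\bar B$ over $Q$ says $\bar B$ is a "quasi-complete intersection–like" quotient, and one analyzes the (truncated) Koszul complex / the first steps of a minimal $Q$-free resolution of $\bar B$; the $h_2$-vanishing says the degree-2 part of $\mathrm{Tor}^Q(\bar B,l)$ carrying the obstruction is not hit nontrivially, which upgrades $\mathrm{pd}_Q\bar B<\infty$ to $\mathrm{pd}_Q l<\infty$. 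More precisely, I would invoke the criterion that when $Q\to\bar B$ is a surjection of regular-over-something rings with the conormal module behaving freely in the relevant degree, a change-of-rings spectral sequence $\mathrm{Tor}^Q_p(\bar B,\mathrm{Tor}^{\bar B}_q(l,l))\Rightarrow\mathrm{Tor}^Q_{p+q}(l,l)$ (or its André--Quillen analogue) collapses enough to transfer finite projective dimension from $\bar B$ to $l$. The $h_2$-vanishing is exactly the hypothesis that kills the first potentially-infinite contribution in this spectral sequence.

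**Wrapping up.** Once $\mathrm{pd}_Q(l\otimes_R R')<\infty$ is established, the data $(A\to R\to\hat B,\; R\to R',\;(Q,E))$ witnesses that the residue field of $B$ is derived $C$-$\varphi$-reflexive*, so hypothesis (iii) of Proposition \ref{4} holds, and therefore $C$ is dualizing. The converse direction is immediate from Proposition \ref{4}(i)$\Rightarrow$(ii). I anticipate that the delicate points are (a) arranging all residue fields to match across completion and the weakly regular base change (handled by standard Cohen-factorization bookkeeping as in \cite{AFH}), and (b) making the André--Quillen / spectral-sequence transfer argument rigorous — in particular verifying that $h_2$-vanishing for $\varphi:A\to B$ survives under the passage $B\rightsquigarrow \hat B\rightsquigarrow \bar B=\hat B\otimes_R R'$ and controls precisely the obstruction class, which is the technical core inherited from \cite{Ma1}.
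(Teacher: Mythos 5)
Your scaffolding matches the paper's proof at the level of bookkeeping: unwind the definition to obtain $A\to R\to\hat B$, a weakly regular $R\to R'$, and a $C\otimes_A R'$-deformation $(Q,E)$ with $\mathrm{pd}_Q(\hat B\otimes_R R')<\infty$, then try to parlay the $h_2$-hypothesis into regularity of $Q$ (your stated goal $\mathrm{pd}_Q(l\otimes_R R')<\infty$ is equivalent to that, since $l\otimes_R R'$ is a regular local quotient of $Q$), and finally conclude as in Proposition \ref{4}. But the step you yourself identify as the heart is precisely where the argument, as proposed, fails. The change-of-rings spectral sequence $\mathrm{Tor}^Q_p(\bar B,\mathrm{Tor}^{\bar B}_q(l,l))\Rightarrow\mathrm{Tor}^Q_{p+q}(l,l)$ cannot ``transfer finite projective dimension from $\bar B$ to $l$'': the modules $\mathrm{Tor}^{\bar B}_q(l,l)$ are nonzero in every degree unless $\bar B$ is already regular, so no collapse occurs, and the $h_2$-vanishing hypothesis --- which is a statement about the map $H_2(A,k,l)\to H_2(B,l,l)$ induced by $\varphi$, not about any internal structure of $\mathrm{Tor}^Q(\bar B,l)$ --- does not enter that spectral sequence in any direct way. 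No amount of ``quasi-complete intersection--like'' analysis of the low-degree syzygies of $\bar B$ over $Q$ is sketched concretely enough to replace it.

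What the paper actually uses are two specific inputs that are absent from your proposal. First, weakly regular homomorphisms induce isomorphisms on $H_2(-,l,l)$ (\cite[Lemma 5]{Ma1}); chasing the two commutative squares built from $\alpha$, $\pi$, $\rho$, $\pi'$, this transports the vanishing of $H_2(A,l,l)\to H_2(B,l,l)$ (your hypothesis, rewritten via \cite[15.12]{An}) into the vanishing of $H_2(R',l,l)\to H_2(\hat B\otimes_R R',l,l)$ --- this is exactly the ``delicate point (b)'' you flag but do not carry out. Second, and crucially, Avramov's theorem \cite{Av1}: $\mathrm{pd}_Q(\hat B\otimes_R R')<\infty$ forces $H_2(Q,l,l)\to H_2(\hat B\otimes_R R',l,l)$ to be \emph{injective}. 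Combining the two gives $H_2(Q,l,l)=0$, hence $Q$ is regular by \cite[6.26]{An}; then $\mathrm{id}_Q(E)<\infty$, so $E$ and hence $C\otimes_A R'\sim\textbf{R}{\rm Hom}_Q(R',E)$ are dualizing \cite[V.2.4]{RD}, and flatness of $A\to R'$ descends this to $C$ (no detour through Proposition \ref{4}(iii) is needed). The injectivity statement is the linchpin that converts ``finite projective dimension over $Q$'' into information about $H_2(Q,l,l)$; without it, and without the transfer lemma for weakly regular maps, your outline has no working mechanism at its core, so the proposal as it stands has a genuine gap.
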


\begin{proof}
The ``only if'' part is a consequence of Proposition \ref{4}.

Assume then that $B$ is derived $C$-$\varphi$-reflexive*. Consider a diagram of local homomorphisms

$$
\xymatrix{ & & Q \ar[dr] & & \\
 & R  \ar[rr]^{\rho} \ar[dr]^{\pi} & & R' \ar[dr]^{\pi'} & \\
A \ar[ur]^{\alpha} \ar[r]^{\varphi} & B \ar[r]^{\beta} & \hat{B} \ar[rr] & & \hat{B}\otimes_RR' }
$$
where $\alpha$ and $\rho$ are weakly regular, $\pi$ is surjective and $(Q,E)$ is a $C \otimes_AR'$-deformation of $R'$ such that pd$_Q(\hat{B}\otimes_RR')<\infty$. We will see first that $Q$ is a regular local ring repeating an argument in the proof of \cite [Proposition 6]{Ma1}.

Let $l$ be the residue field of $Q$ and $\hat{B}\otimes_RR'$. The commutative square

$$
\xymatrix{A \ar[r]^{\alpha} \ar[d]^{\varphi} & R \ar[d]^{\pi}  \\
B \ar[r]^{\beta} & \hat{B}}
$$
induces a commutative square

$$
\xymatrix{H_2(A,l,l) \ar[r]^{\tilde{\alpha}} \ar[d]^{\tilde{\varphi}} & H_2(R,l,l) \ar[d]^{\tilde{\pi}}  \\
H_2(B,l,l) \ar[r]^{\tilde{\beta}} & H_2(\hat{B},l,l).}
$$

We have $\tilde{\varphi}=0$ since $\varphi$ has the $h_2$-vanishing property (we have used that if $k\rightarrow l$ is a field extension we have $H_n(k,l,l)=0$ for all $n \geq 2$ \cite [7.4]{An}; so if $A\rightarrow k \rightarrow l$ are ring homomorphisms with $k$ and $l$ fields, from the Jacobi-Zariski exact sequence \cite [5.1]{An} we obtain $H_n(A,k,l)=H_n(A,l,l)$ for all $n \geq 2$; finally,  $H_n(A,k,k) \otimes_kl=H_n(A,k,l)$ for all $n$ by \cite [3.20]{An}).

Since $\alpha$ is weakly regular, by \cite[Lemma 5]{Ma1}, $\tilde{\alpha}$ is an isomorphism, and so $\tilde{\pi} =0$. Consider now the commutative square

$$
\xymatrix{  H_2(R,l,l) \ar[d]^0 \ar[r]^{\tilde{\rho}} & H_2(R',l,l) \ar[d]^{\tilde{\pi'}} \\
H_2(\hat{B},l,l) \ar[r]  & H_2(\hat{B}\otimes_RR',l,l). }
$$
Again by \cite[Lemma 5]{Ma1}, $\tilde{\rho}$ is an isomorphism, and then $\tilde{\pi'}=0$. So the composition
$$H_2(Q,l,l) \rightarrow H_2(R',l,l) \xrightarrow{\tilde{\pi'}} H_2(\hat{B}\otimes_RR',l,l)$$
vanishes. But by \cite{Av1}, pd$_Q(\hat{B}\otimes_RR')<\infty$ implies that
$$H_2(Q,l,l) \rightarrow H_2(\hat{B}\otimes_RR',l,l)$$
is injective. Therefore $H_2(Q,l,l)=0$, and then $Q$ is regular by \cite [6.26]{An}.

Now the proof finishes as the proof of Proposition \ref{4}: since id$_Q(E)<\infty$, the semidualizing complex $E$ is dualizing; then $C \otimes_AR' \sim \textbf{R}{\rm Hom}_Q(R',E)$ is also dualizing \cite [V.2.4]{RD} and since $\rho \alpha$ is flat, we deduce that $C$ is dualizing.

\end{proof}

\begin{rem}
If a homomorphism in a composition has $h_2$-vanishing property, then so has the composition. Therefore Theorem \ref{th} can also be stated as follows:

Let $\varphi : A \rightarrow B$ be a local homomorphism and $C$ a semidualizing complex over $A$. Assume that $\varphi$ has the $h_2$-vanishing property. If there exists a local homomorphism $\phi :B \rightarrow S$ such that $S$ is derived $C$-$\phi\varphi$-reflexive*, then $C$ is dualizing.
\end{rem}

% ----------------------------------------------------------------
%\bibliographystyle{amsplain}%
%\bibliography{}%

\end{document}